\newcommand{\hs}{\hspace{1.0em}}
\newcommand{\SFFD}{\text{II}_D}
\newcommand{\X}{\mathfrak{X}}
\renewcommand{\d}[2]{\frac{d#1}{d#2}}
\def\hook{{\mathchoice{\vrule height 0pt depth 0.4pt width 3pt
\vrule height 5pt depth 0.4pt \kern 3pt} {\vrule height 0pt depth
0.4pt width 3pt \vrule height 5pt depth 0.4pt \kern 3pt} {\vrule
height 0pt depth 0.2pt width 1.5pt \vrule height 3pt depth 0.2pt
width 0.2pt \kern 1pt} {\vrule height 0pt depth 0.2pt width 1.5pt
\vrule height 3pt depth 0.2pt width 0.2pt \kern 1pt} }}
\theoremstyle{plain}
\newtheorem{thm}{Theorem}[section]
\newtheorem{propn}[thm]{Proposition}
\newtheorem{cor}[thm]{Corollary}
\theoremstyle{definition}
\newtheorem{defn}[thm]{Definition}
\theoremstyle{remark}
\newtheorem*{rmk}{Remarks}
\begin{document}

\title{Torsion and the second fundamental form for distributions}

%\author{Thoan Do and Geoff Prince}
%\address{Department of Mathematics and Statistics, La Trobe University, Victoria, 3086, Australia}
%\email{t4do@students.latrobe.edu.au, \ g.prince@latrobe.edu.au}
%\date{\today}

\author{G.E. Prince
\thanks{Email: {\tt geoff\char64 amsi.org.au}}
\\Department of Mathematics and Statistics,\\ La Trobe University,\\ Victoria 3086,
Australia;\\ The Australian Mathematical Sciences Institute,\\ c/o The University of Melbourne,\\Victoria 3010, Australia}

\maketitle

\begin{abstract}
The second fundamental form of Riemannian geometry is generalised to the case of a manifold with a linear connection and an integrable distribution.
This bilinear form is generally not symmetric and its skew part is the torsion. The form itself is closely related to the shape map of the connection. The codimension one case generalises the traditional shape operator of Riemannian geometry.
\end{abstract}

\section{Motivation}
Even though the torsion of a linear connection is intrinsically defined as a vector-valued two-form, it is natural to ask if it is the skew symmetric part of some vector-valued bilinear form. In this short communication we will show, in the context of an integrable distribution, that the part of the torsion transverse to the leaves of the corresponding foliation is indeed the skew part of a bilinear form, namely the second fundamental form of the distribution.

This result can be seen in the broader context of the identification of results apparently restricted to fields such as Riemannian, Finsler and contact geometry as specific cases of more general theorems in the geometries of linear connections and elsewhere.

The monograph of Bejancu and Farran~\cite{BF06} does deal with the second fundamental form on distributions, but in the restricted context of semi-Riemannian geometry where orthogonal complements are available. The symmetry of their second  fundamental form is a necessary and sufficient condition for the integrability of the distribution in question. This is a rather different development to that which we present here and we leave the interested reader to explore the ideas of Bejancu and Farran.

\section{The Riemannian case}

We follow Lee~\cite{Lee97}. Suppose that $S$ is an embedded Riemannian submanifold, of constant dimension $p$, of an $n$-dimensional Riemannian manifold $(M,g).$ Let $\nabla^g$ be the Levi-Civita connection on $M$ and, if $\bar g$ is the restriction of $g$ to $S$, then $\nabla^{\bar g}$ is the Levi-Civita connection of $\bar g$ on $S$. For $x\in S$ let $T_xM=T_xS\bigoplus N_x$ where $N_x$ is the orthogonal complement of $T_xS$. Define $\pi^\top$ and $\pi^\bot$ to be projectors of $T_xM$ onto $T_xS$ and $N_x$ respectively and, for $X$ on $S$, denote $\pi^\top(X)$ by $X^\top$ and $\pi^\bot(X)$ by $X^\bot$.

Setting  $N(S):=\displaystyle\bigcup_{\stackrel{x\in S}{}} N_xS,$ the second fundamental form, II, of $S$ is the map from $\X(S)\times\X(S) \to N(S):$
$$\text{II}(X,Y):=(\nabla^g_XY)^\bot$$
where on the right hand side $X,Y$ are extended arbitrarily to $M$. With this definition Lee establishes the well-known results

\begin{propn} The second fundamental form is
\begin{itemize}
\item[1.] independent of the extensions of $X$ and $Y$;
\item[2.]bilinear over $C^\infty(S)$;
\item[3.]symmetric in $X$ and $Y$; and satisfies
\item[4.] $\nabla^g_XY=\nabla^{\bar g}_XY+\text{II}(X,Y)$\ (the Gauss Formula).
\end{itemize}
\end{propn}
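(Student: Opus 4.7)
The plan is to verify the four properties in the order listed, each time exploiting a single standard feature of the Levi-Civita connection: tensoriality in the first slot, the Leibniz rule in the second, torsion-freeness, and metric compatibility. For independence of extensions, I would note first that $(\nabla^g_X Y)_x$ depends on $X$ only through $X_x \in T_x S$, so the $X$-extension is irrelevant. For $Y$, if two extensions $Y_1, Y_2$ agree on $S$ then their difference vanishes along any curve in $S$ through $x$ with tangent $X_x$, and the fact that the covariant derivative along a curve is determined solely by the values of the field along that curve forces $\nabla^g_{X_x} Y_1 = \nabla^g_{X_x} Y_2$.

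Bilinearity over $C^\infty(S)$ in the first slot is inherited from the tensoriality of $\nabla^g$ there. In the second slot, expanding $\nabla^g_X(fY) = X(f)\,Y + f\,\nabla^g_X Y$ and applying $\pi^\bot$ kills the first term since $Y$ is tangent to $S$, giving $\text{II}(X,fY) = f\,\text{II}(X,Y)$. Symmetry follows at once from the torsion-free identity $\nabla^g_X Y - \nabla^g_Y X = [X,Y]$: because $[X,Y]$ is tangent to $S$ whenever $X$ and $Y$ are, its normal component vanishes, so $\text{II}(X,Y) = \text{II}(Y,X)$.

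For the Gauss formula I would introduce $\tilde\nabla_X Y := (\nabla^g_X Y)^\top$ and verify that $\tilde\nabla$ is a linear connection on $S$ which is both torsion-free and compatible with $\bar g$; the Fundamental Theorem of Riemannian Geometry then identifies $\tilde\nabla$ with $\nabla^{\bar g}$, and the orthogonal decomposition $\nabla^g_X Y = (\nabla^g_X Y)^\top + (\nabla^g_X Y)^\bot$ delivers the stated formula. The only real subtlety lies in item~1, where care is needed to interpret the covariant derivative of a field not originally defined off $S$; once that point is granted, the remaining verifications are short algebraic manipulations.
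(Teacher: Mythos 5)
Your proof is correct on all four counts, and items 1, 3 and 4 follow the standard route (the one in Lee, which the paper simply cites). The one place you genuinely diverge is item 2: the paper explicitly says that bilinearity of $\text{II}$ is obtained from $C^\infty(S)$-linearity in the \emph{first} argument together with the symmetry property 3, whereas you prove second-slot linearity directly, expanding $\nabla^g_X(fY)=X(f)\,Y+f\,\nabla^g_XY$ and observing that $\pi^\bot$ annihilates $X(f)\,Y$ because $Y$ is tangent to $S$. Both arguments are valid here, but they buy different things. The paper's route is shorter yet leans on symmetry, which is exactly the property that fails once the connection has torsion; accordingly, in Proposition~3.4 the author must switch to a different device (function-linearity of the shape map $A_X$ in its argument) to recover bilinearity of $\SFFD$. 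Your direct Leibniz-plus-projection argument needs no symmetry at all and would transfer verbatim to the torsion case of Section~3, so in a sense it is better adapted to the generalisation the paper is driving at, at the cost of being marginally longer in the purely Riemannian setting. Your treatment of item 1 (tensoriality in $X$; for $Y$, dependence of the covariant derivative along a curve only on the restriction of the field to that curve) and of item 4 (verify that $(\nabla^g_XY)^\top$ is a torsion-free metric connection on $S$ and invoke uniqueness of the Levi-Civita connection) is exactly the standard argument and is sound.
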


The bilinearity of $\text{II}$ is established using the linearity in the first argument and the symmetry property 3. above. This last property holds because the connection is symmetric, that is, it has zero torsion. As we will see dropping the metric from the picture changes this.

\section{Manifolds with connection}\label{sect3}

Suppose that $M$ is a manifold of dimension $n$ with a linear connection $\nabla$ having non-zero torsion $T$. For $X,Y \in \X(M)$ we have the usual definition
\begin{equation}\label{torsion}
T(X,Y):= \nabla_XY-\nabla_YX-[X,Y].
\end{equation}

The shape map of the connection is an endomorphism of tangent spaces defined as follows. Denoting parallel transport using
$\nabla$ on $M$ by $\tau_t$ and denoting the flow generated by a vector field $Z$ on $M$ by
$\zeta_t,$ we have (see \cite{CP84, JP00})

\begin{defn}\label{shape map}
\begin{equation*}
A_Z(\xi) := \d{}{t}\bigg\vert_{t=0} \tau^{-1}_t(\zeta_{t*}\xi)
\hs\text{where}\hs \xi\in T_xM.
\end{equation*}
\end{defn}

\noindent The shape map is intimately related to the torsion as follows (see \cite{JP02} and also Proposition 2.9 of volume I of Kobayashi and Nomizu~\cite{KN63} which guarantees that the sum $\nabla Z + Z \hook T$ is a tangent space endomorphism).

\begin{propn}\label{torsion AZ}
\begin{equation*}
A_Z(\xi) =  \nabla_\xi Z + T(Z_x,\xi), \hs \xi\in T_xM.
\end{equation*}
\end{propn}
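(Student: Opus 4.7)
The plan is to extend $\xi$ arbitrarily to a local vector field $X$ with $X_x=\xi$, and then to split the expression inside the derivative of Definition~\ref{shape map} into two pieces:
$$
\tau_t^{-1}(\zeta_{t*}\xi) \;=\; \tau_t^{-1}\bigl(X_{\zeta_t(x)}\bigr) \;-\; \tau_t^{-1}\bigl(X_{\zeta_t(x)} - \zeta_{t*}X_x\bigr).
$$
The first piece is the standard characterisation of the covariant derivative by parallel transport along the integral curve $t\mapsto\zeta_t(x)$ of $Z$, so its derivative at $t=0$ is exactly $\nabla_{Z_x}X$. The second piece is where the Lie bracket will appear, and the torsion identity~\eqref{torsion} will then rearrange $\nabla_ZX-[Z,X]$ into $\nabla_XZ+T(Z,X)$ to produce the stated formula.

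To handle the second piece I would invoke the definition of the Lie derivative,
$$
\mathcal{L}_Z X\big|_x \;=\; \lim_{t\to 0}\,t^{-1}\bigl(\zeta_{-t*}X_{\zeta_t(x)}-X_x\bigr) \;=\; [Z,X]_x,
$$
which rewrites as $X_{\zeta_t(x)}-\zeta_{t*}X_x = t\,\zeta_{t*}[Z,X]_x + o(t)$. Applying $\tau_t^{-1}$ and noting that $\tau_t^{-1}\circ\zeta_{t*}$ is continuous in $t$ with value the identity on $T_xM$ at $t=0$, the derivative of the second piece at $t=0$ is simply $[Z,X]_x$. Combining the two contributions gives
$$
A_Z(\xi) \;=\; \nabla_{Z_x}X - [Z,X]_x,
$$
and \eqref{torsion} then converts this into $(\nabla_XZ)_x + T(Z_x,X_x) = \nabla_\xi Z + T(Z_x,\xi)$, as required. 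A pleasant by-product is that the right-hand side depends only on the pointwise data $\xi$ and $Z_x$, which retrospectively confirms that the shape map in Definition~\ref{shape map} is well-defined independently of the extension $X$.

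The main delicate point is the Lie-derivative step: I need to verify that the $o(t)$ error in the expansion of $X_{\zeta_t(x)}-\zeta_{t*}X_x$ survives passage through $\tau_t^{-1}$ as an $o(t)$ quantity, and that $\tau_t^{-1}(\zeta_{t*}[Z,X]_x)\to[Z,X]_x$ as $t\to 0$. Both facts follow from the smooth dependence of parallel transport and the flow on their parameter, but they deserve a line of justification since this is the first place where the connection-based family $\tau_t^{-1}$ and the flow-based family $\zeta_{t*}$ are being compared directly on a single tangent space.
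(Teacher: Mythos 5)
Your argument is correct, and it arrives at the same two-step skeleton as the paper --- reduce the $t$-derivative to a covariant derivative along the integral curve of $Z$, then apply the torsion identity \eqref{torsion} to convert $\nabla_Z X - [Z,X]$ into $\nabla_X Z + T(Z,X)$ --- but the technical execution differs in the choice of extension. The paper takes $X$ to be the Lie-dragged extension of $\xi$ along the flow of $Z$, so that $\zeta_{t*}\xi = X_{\zeta_t(x)}$ holds \emph{exactly} for all $t$; the entire computation then collapses to $A_Z(\xi) = \frac{d}{dt}\big\vert_{t=0}\tau_t^{-1}(X_{\zeta_t(x)}) = (\nabla_Z X)_x$, and the bracket term in \eqref{torsion} dies because $\mathcal{L}_Z X = 0$ by construction. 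You instead take an arbitrary extension, which forces you to carry the correction term $\tau_t^{-1}(X_{\zeta_t(x)} - \zeta_{t*}X_x)$ and to justify, via the limit definition of $\mathcal{L}_Z X$ and the continuity of $\tau_t^{-1}\circ\zeta_{t*}$ at $t=0$, that its derivative is $[Z,X]_x$; this extra analytic step is exactly what the paper's choice of extension is designed to avoid. What your route buys in exchange is that you prove the identity $A_Z(\xi) = (\nabla_Z X)_x - [Z,X]_x$ for \emph{every} extension $X$ of $\xi$ --- that is, you establish \eqref{SM2} directly from Definition~\ref{shape map} rather than deriving it afterwards from the proposition --- and you get the extension-independence of the formula as an explicit by-product rather than as something built into the construction. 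Your flagged "delicate point" is handled adequately: $\zeta_{t*}$ and $\tau_t^{-1}$ are smooth one-parameter families of linear isomorphisms equal to the identity at $t=0$, so they preserve $o(t)$ terms and $\tau_t^{-1}\zeta_{t*}[Z,X]_x \to [Z,X]_x$; a one-line remark to that effect is all that is needed.
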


\begin{proof}
Let $X$ be the field obtained by Lie dragging $\xi$ along the
integral curve of $Z$ through $x$. Then
\begin{align*}
A_Z(\xi) &=\d{}{t}\bigg\vert_{t=0} (\tau^{-1}_t X_{\zeta_t(x)}) =
(\nabla_Z
X)_x\\
&= (\nabla_X Z)_x + T(Z,X)_x + (\mathcal{L}_ZX)_x = \nabla_\xi Z + T(Z_x,\xi)
\end{align*}
where we have used \eqref{torsion} and
$\mathcal{L}_ZX=0$.
\end{proof}

This result indicates that $A_Z$ is not in general function linear in $Z$. When the connection is symmetric $A_Z$ is just $\nabla Z$, the
covariant differential of $Z$. The Raychaudhuri
equation and its generalisation are obtained by assuming $Z$ is auto-parallel with respect to
$\nabla$ and taking the trace of $\mathcal{L}_Z A_Z,$ see \cite{CP84,JP00}. Vector fields
satisfying $A_Z X=\nabla_Z X$ along an auto-parallel field $Z$ can be
shown to satisfy a generalised Jacobi's equation~\cite{JP02}.

Along with \eqref{torsion} the following identities will be useful
\begin{align}
A_X(Y)&= \nabla_YX+T(X,Y)\label{SM1} \\
A_X(Y)&=\nabla_XY-[X,Y]\label{SM2}\\
T(X,Y)&=A_X(Y)-A_Y(X)+[X,Y]. \label{SM3}
\end{align}

Now suppose that $D\subset\X(M)$ is an integrable distribution of constant dimension $p$ on $M$ with annihilator $D^\bot\subset\mathfrak{X}^*(M).$ (We don't distinguish these sub-bundles from the submodules that they generate.)
Further suppose that $\X(M)=D\bigoplus D'$ where $D'$ is fixed and not necessarily integrable. Define $\pi^\top$ and $\pi^\bot$ to be projectors of $T_xM$ onto $D_x$ and $D_x'$ respectively, and denote $\pi^\top(X)$ by $X^\top$ and $\pi^\bot(X)$ by $X^\bot$. Note that since $D_x$ is defined at every point $x\in M$ there is no a priori need for two separate connections, one on $M$ and one on $D.$ Following the standard definition of the fundamental form given in the preceding section we define the second fundamental form on $D$ by
\begin{equation}\label{eq:SFFD}
\SFFD(X,Y):=(\nabla_XY)^\bot \quad \text{for}\ X,Y\in D.
\end{equation}

We will now show that $\SFFD$ is a bilinear form on $D$ whose skew symmetric part is $T^\bot.$
%\newpage

\begin{propn}\label{SFFD}
\
For all $X,Y \in D$
\begin{itemize}
\item[1.] $\SFFD(X,Y)=A_X(Y)^\bot;$
\item[2.] $\SFFD \ \text{is a bilinear form on}\ D;$
\item[3.] $\SFFD(X,Y)-\SFFD(Y,X)=T(X,Y)^\bot;$
\item[4.] $(A_X(Y)-\nabla_XY)^\top=A_X(Y)-\nabla_XY.$
\end{itemize}
\end{propn}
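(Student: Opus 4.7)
The whole proposition follows from a single observation: because $D$ is integrable, $X,Y\in D$ forces $[X,Y]\in D$, so $[X,Y]^\bot=0$. Combined with the identities \eqref{SM1}--\eqref{SM3} this handles all four items with essentially no computation.

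For part 1, I would simply take the $\bot$-projection of \eqref{SM2}:
\[
A_X(Y)^\bot = (\nabla_XY)^\bot - [X,Y]^\bot = (\nabla_XY)^\bot = \SFFD(X,Y),
\]
where the middle equality is the integrability observation. For part 3, I would likewise apply $\pi^\bot$ to the torsion identity \eqref{torsion}, so that $T(X,Y)^\bot = (\nabla_XY)^\bot - (\nabla_YX)^\bot - [X,Y]^\bot = \SFFD(X,Y)-\SFFD(Y,X)$.

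For part 2, additivity in each slot is immediate from the additivity of $\nabla$. For $C^\infty(M)$-linearity in the first argument I would use $\nabla_{fX}Y=f\nabla_XY$ directly. For the second argument I would expand
\[
\SFFD(X,fY) = \bigl(X(f)Y+f\nabla_XY\bigr)^\bot = f\,\SFFD(X,Y),
\]
since $Y\in D$ makes $Y^\bot=0$ and kills the first summand under $\pi^\bot$. Finally, for part 4 I would rearrange \eqref{SM2} to read $A_X(Y)-\nabla_XY=-[X,Y]$, which lies in $D$ by integrability, so its $\bot$-component vanishes and hence its $\top$-component returns the full vector.

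There is no real obstacle here; the only thing requiring care is identifying precisely where integrability of $D$ enters. Indeed, the plan above makes it transparent that without integrability parts 1, 3 and 4 would all fail, because the residual $[X,Y]^\bot$ terms would no longer drop out --- a useful conceptual takeaway from an otherwise routine verification.
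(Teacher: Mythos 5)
Your proof is correct and follows the paper's argument almost exactly: parts 1, 3 and 4 are handled the same way, by projecting \eqref{SM2} and \eqref{torsion} and using $[X,Y]\in D$. The one place you diverge is part 2: the paper gets function-linearity in the second slot from part 1 together with the fact that $A_X$ is a tangent-space endomorphism (the point of the Kobayashi--Nomizu remark preceding Proposition~\ref{torsion AZ}), whereas you compute directly with the Leibniz rule, $\bigl(X(f)Y+f\nabla_XY\bigr)^\bot=f(\nabla_XY)^\bot$ since $Y^\bot=0$. Both are valid; your route is more elementary and has the small conceptual bonus that bilinearity of $\SFFD$ is seen to need only $Y\in D$, not integrability of $D$ (so your closing remark that integrability is what makes parts 1, 3 and 4 work, but is not needed for part 2 as you prove it, is exactly right). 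The paper's route, by contrast, makes visible the link between $\SFFD$ and the shape map that is exploited later in the codimension-one discussion.
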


\begin{proof}
\
\begin{itemize}
\item[1.] $\SFFD(X,Y)=A_X(Y)^\bot.$ This follows from the definition of $\SFFD$, \eqref{SM2} and, because $D$ is integrable,  $[X,Y]^\bot=0$, for $X,Y \in D.$
\item[2.] $\SFFD \ \text{is a bilinear form on}\ D.$ Clearly from the definition $\SFFD$ is function linear in its first argument, and from 1. above it is also function linear in its second argument since $A_X$ is an endomorphism.
\item[3.] $\SFFD(X,Y)-\SFFD(Y,X)=T(X,Y)^\bot.$ This follows from \eqref{eq:SFFD} and \eqref{torsion} and the integrability of $D.$
\item[4.] $(A_X(Y)-\nabla_XY)^\top=A_X(Y)-\nabla_XY.$ This is because $[X,Y]^\top=[X,Y].$
\end{itemize}

\end{proof}
\begin{rmk}
The result $\SFFD(X,Y)-\SFFD(Y,X)=T(X,Y)^\bot$ holds for each choice of $D'$ complementary to $D$. Of course, in the metric case the orthogonal complement is distinguished.
However, we can see the result that $T$ is the skew part of a bilinear form without explicitly choosing $D'$. For each $\theta\in D^\bot$
\begin{align*}
&\theta(\SFFD(X,Y)-\SFFD(Y,X))=\theta(T(X,Y)^\bot)\\
\iff &\theta(A_X(Y)-A_Y(X))=\theta(T(X,Y)),
\end{align*}
remembering that $\theta(A_X(Y))$ is bilinear in $X$ and $Y$.
\end{rmk}

The following corollary gives an unequivocally geometric representation of the classical second fundamental form.

\begin{cor} Let $S$ be an embedded Riemannian submanifold of a Riemannian manifold $M$ with shape map associated with the Levi-Civita connection on $M$ defined in \ref{shape map}.
If $X,Y$ are vector fields tangent to $S$ then the map
$$(X,Y) \mapsto (A_X(Y))^\bot$$
is function-bilinear and symmetric and equal to the second fundamental form on $S$.

\end{cor}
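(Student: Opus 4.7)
The plan is to obtain the corollary as a direct specialisation of Proposition~\ref{SFFD} to the distribution $D=TS$, with complementary distribution $D'=N(S)$ supplied by the metric $g$. First I would set up the hypotheses of Section~\ref{sect3}: on a tubular neighbourhood of $S$ in $M$, take $D$ to be the integrable distribution whose leaves are the translates of $S$ under the normal exponential map, so that $S$ itself is a leaf, and take $D'=D^\bot$ to be the orthogonal distribution. Along $S$, the projectors $\pi^\top,\pi^\bot$ of Section~\ref{sect3} then coincide with Lee's projectors onto $T_xS$ and $N_x$, and the Levi-Civita connection $\nabla^g$ plays the role of $\nabla$.

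With this dictionary, items~1 and~2 of Proposition~\ref{SFFD} immediately give that $(X,Y)\mapsto(A_X(Y))^\bot$ equals $\SFFD(X,Y)$ and is $C^\infty(S)$-bilinear. Because $\nabla^g$ is torsion-free, item~3 reduces to $\SFFD(X,Y)=\SFFD(Y,X)$, yielding the required symmetry. Finally, the definition \eqref{eq:SFFD} reads $\SFFD(X,Y)=(\nabla^g_X Y)^\bot$, which is exactly Lee's classical $\text{II}(X,Y)$ on $S$.

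The only technical point, and the main (if modest) obstacle, is that $X,Y$ are originally defined only on $S$, whereas the shape map $A_X(Y)$ is \emph{a priori} an object on $M$. The standard argument that $(\nabla^g_X Y)^\bot$ is independent of the extensions of $X,Y$ off $S$ transfers to $(A_X(Y))^\bot$ via the identity~\eqref{SM2}, since $[X,Y]^\bot=0$ for $X,Y$ tangent to the integrable $D$. Hence no further work is needed beyond invoking Proposition~\ref{SFFD}.
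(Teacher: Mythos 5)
Your proposal is correct and follows exactly the route the paper intends: the corollary is stated without proof as an immediate specialisation of Proposition~\ref{SFFD} to $D=TS$, $D'=N(S)$, $\nabla=\nabla^g$, with item~3 giving symmetry because the Levi-Civita connection is torsion-free. Your extra care in extending $TS$ to an integrable distribution on a tubular neighbourhood (so that Proposition~\ref{SFFD}, which is stated for a distribution on $M$, literally applies) is a detail the paper glosses over, but it does not change the argument.
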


\section{Codimension $\bold{1}$ generalisations}

Codimension one submanifolds (hypersurfaces) have a special place in Riemannian geometry with the (symmetric) second fundamental form inducing the self-adjoint shape operator whose real eigenvalues are the principal curvatures. We will demonstrate a generalisation to codimension one distributions in which the torsion is the obstruction to the construction of such curvatures.

Again we follow Lee~\cite{Lee97}. Let $S$ be an $n-1$ dimensional submanifold of $(M,g)$ with unit normal $N$ (determined up to a sign). The {\em scalar second fundamental form} $h$ on $S$ is defined by
\begin{equation*}
h(X,Y):=g(\text{II}(X,Y),N) \iff \text{II}(X,Y)=h(X,Y)N
\end{equation*}
As a result we can define the {\em shape operator} $s$ on $S$, a self-adjoint tangent space endomorphism, by
\begin{equation*}
g(s(X),Y)=h(X,Y).
\end{equation*}
Thinking of the lowering action of $g$ we can write this as
\begin{equation}\label{shape op}
Y\hook g\circ s(X)=h(X,Y)
\end{equation}
The eigenvalues of $s$ are (up to a sign) the $n-1$ {\em principal curvatures}, $\kappa_a$ of $S$. In an orthonormal  basis of eigenvectors of $s$
\begin{equation}\label{PCs}
h(X,Y)=\kappa_1X^1Y^1+\dots+\kappa_{n-1}X^{n-1}Y^{n-1},
\end{equation}
from which many good things follow.

Now suppose that the distribution $D$ of section~\ref{sect3} is of codimension one with Frobenius integrable constraint form $\theta$, that is, $\theta(D)=0$ and $d\theta\wedge\theta=0$. Further suppose that $D'=Sp\{N\}$ with $\theta(N)=1$.\newline
Now we define the scalar second fundamental form, $h_D$, on $D$ by
\begin{equation*}
h_D(X,Y):=\theta(\SFFD(X,Y)) \iff \SFFD(X,Y)=h_D(X,Y)N.
\end{equation*}
(Remember $\SFFD(X,Y):=(\nabla_XY)^\bot\in Sp\{N\}.$)
Because of the first part of proposition~\ref{SFFD} we have
\begin{equation*}
h_D(X,Y):=\theta(A_X(Y)^\bot)=\theta(A_X(Y))=A^*_X\theta(Y).
\end{equation*}
Now for an arbitrary form $\phi$ the pullback $A_X^*\phi$ is not function linear in $X \in D$, however $A_X^*\theta$ does have this property because $II_D$ is bilinear by proposition~\ref{SFFD}. So we define $A^*\theta: X\mapsto A_X^*\theta$ and hence
\begin{equation*}
Y\hook A^*\theta(X)=h_D(X,Y)
\end{equation*}
Comparison with~\eqref{shape op} indicates that $\theta\circ A:=A^*\theta$ plays the role of $g\circ s$ in this generalisation. Part 3 of proposition~\ref{SFFD} shows that it is exactly $T(X,Y)^\bot$, equivalently $\theta(T(X,Y))$, which prevents $h_D$ from being symmetric and hence diagonalisable in the form \eqref{PCs}.

\end{document}